\begin{document}



\section{Introduction}
In \cite{diffusion} a claim is made that the semigroup defined by (2.4) thereof is strongly continuous on space $L$ defined on page 299 thereof. We prove that this is not the case. Nevertheless, we show that the only assertion of the paper following from the aforementioned assumption of strong continuity, namely the claim that (2.20) solves the Stein equation (2.1), remains true. This may be proved by adapting the proof of \cite[Proposition 9, p. 9]{ethier} and noting that in the case of interest in \cite{diffusion}, the point-wise continuity of the semigroup is sufficient. It then follows that all the other results of \cite{diffusion} hold true.

In Section \ref{defs} we recall the relevant definitions and notation from \cite{diffusion}. In Section \ref{count} we give a counterexample to the strong continuity of the semigroup. In Section \ref{sol} we provide a proof of the fact that the function (2.20) of \cite{diffusion} does actually solve the Stein equation. We do this by following the steps of the proof of \cite[Proposition 9, p. 9]{ethier} and proving each of the assertions therein  for the semigroup of interest by hand. 

\section{Definitions and notation}\label{defs}
By $D=D[0,1]$ we will mean the Skorohod space of all the c\`adl\`ag functions\\ $w:[0,1]\to\mathbb{R}$. In the sequel $\|\cdot\|$ will always denote the supremum norm. By $D^kf$ we mean the $k$-th Fr\'echet derivative of $f$ and the $k$-linear norm $B$ is defined to be $\|B\|=\sup_{\lbrace h:\|h\|=1\rbrace} |B[h,...,h]|$. We will also often write $D^2f(w)[h^{(2)}]$ instead of $D^2f(w)[h,h]$. Let:
$$L=\left\lbrace f:D\to\mathbb{R}:f\text{ is continuous and }\sup_{w\in D}\frac{|f(w)|}{1+\|w\|^3}<\infty\right\rbrace$$
and for any $f\in L$ let $\|f\|_L=\sup_{w\in D}\frac{|f(w)|}{1+\|w\|^3}$.

We define:
\begin{align*}
\|f\|_M=\sup_{w\in D}\frac{|f(w)|}{1+\|w\|^3}+\sup_{w\in D}\frac{\|Df(w)\|}{1+\|w\|^2}+\sup_{w\in D}\frac{\|D^2f(w)\|}{1+\|w\|}+\sup_{w,h\in D}\frac{\|D^2f(w+h)-D^2f(w)\|}{h}
\end{align*}
for any $f\in L$ for which the expressions exist and 
$$M=\left\lbrace f\in L:f\text{ is twice Fr\'echet differentiable and }\|f\|_M<\infty\right\rbrace.$$

The Stein operator for approximation by $Z$, the Brownian Motion on $[0,1]$, is defined, as in (2.9) and (2.11) of \cite{diffusion}, by:
$$\mathcal{A}f(w)=-Df(w)[w]+\mathbb{E}D^2f(w)\left[Z^{(2)}\right]=-Df(w)[w]+\sum_{k\geq 0} D^2f(w)\left[S_k^{(2)}\right],$$
for any $f:D[0,1]\to\mathbb{R}$, for which it exists. By $(S_k)_{k\geq 0}$ we denote the Schauder functions defined, as on page 299 of \cite{diffusion} by:
$$S_0(t)=t;\qquad S_k(t)=\int_0^t H_k(u) du,\quad k\geq 1,$$
where, for $2^n\leq k<2^{n+1}$:
$$H_k(u)=2^{n/2}\left(\mathbb{1}\left[\frac{k}{2^n}-1\leq u\leq \frac{k+\frac{1}{2}}{2^n}-1\right]-\mathbb{1}\left[\frac{k+\frac{1}{2}}{2^n}-1<u\leq \frac{k+1}{2^n}-1\right]\right).$$

We also define a semigroup acting on $L$:
\begin{equation}\label{semigroup}
(T_uf)(w)=\mathbb{E}\left[f\left(we^{-u}+\sigma(u)Z\right)\right],
\end{equation}
where $\sigma^2(u)=1-e^{-2u}$.

For any $g\in M$ with $\mathbb{E}g(Z)=0$, the Stein equation is given by:
$$\mathcal{A}f=g.$$
The idea of Stein's method applied in \cite{diffusion} is to find a bound on $\mathbb{E}\mathcal{A}f(X)$, where $f$ is a solution to this equation, in order bound $|\mathbb{E}g(X)-\mathbb{E}g(Z)|$, for some stochastic process $X$ on $[0,1]$.
\section{Counterexample to strong continuity}\label{count}
It is well known that the Ornstein-Uhlenbeck semigroup is not strongly continuous on the space $C_b(\mathbb{R})$, see \cite{daprato1995ornstein}.   More generally, given a separable Hilbert space $H$, in \cite{manca2008kolmogorov} it is noted that this semigroup is also not strongly continuous on the space $C_{b,k}$ of all continuous functions $\psi:H\rightarrow \mathbb{R}$ such that $x \rightarrow \psi(x)/(1+|x|^k)$ is uniformly continuous and $\sup_{x\in H}\frac{\psi(x)}{1 + |x|^k} < \infty$.   Following these two results, in this section we shall show that the semigroup $T_u$ defined by (\ref{semigroup}) is not strongly continuous on the Banach space $L$ by constructing an explicit counterexample.
\begin{lemma}
The semigroup $T_u$ is not strongly continuous on $\left(L,\|\cdot\|_L\right)$.
\end{lemma}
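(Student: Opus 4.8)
The plan is to exhibit a single function $f\in L$ and a single point $w\in D$ such that $(T_uf)(w)$ does not converge to $f(w)$ as $u\downarrow 0$; this suffices, since strong continuity would force $\|T_uf-f\|_L\to0$, hence in particular pointwise convergence at every $w$. The natural candidate exploits the cubic growth allowed in $L$: I would take $f$ to depend on $w$ only through $\|w\|$, say $f(w)=\phi(\|w\|)$ for a suitable continuous $\phi:[0,\infty)\to\mathbb{R}$ with $\phi(r)=O(1+r^3)$, and evaluate at a sequence of points $w$ of large norm. The point is that $T_uf(w)=\mathbb{E}\,\phi\bigl(\|we^{-u}+\sigma(u)Z\|\bigr)$, and even though $\sigma(u)\to0$, the factor $\sigma(u)$ multiplies an unbounded process $Z$, and when $\|w\|$ is allowed to grow, the contribution of the fluctuation term does not vanish uniformly. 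Concretely, one expects $\|we^{-u}+\sigma(u)Z\|\approx \|w\|e^{-u}+\sigma(u)\|Z\|_{\mathrm{loc}}$-type deviations near the location of the maximum of $w$, producing an $O(\|w\|^2\,u)$-size change in $\phi(\|w\|)\sim\|w\|^3$, which is exactly of the order of the normalising factor $1+\|w\|^3$.

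**Carrying this out**, I would first fix $\phi(r)=r^3$ (or a smoothed version, but $r^3$ is already continuous and in $L$ with $\|f\|_L\le1$), and for each small $u$ choose $w=w_u$ to be a deterministic path whose supremum norm is large — for instance a path that is linear up to time $1$ reaching height $R=R(u)$, or even the constant path equal to $R$ — with $R$ to be optimised against $u$. Then I would estimate from below
\[
\frac{\bigl|(T_uf)(w_u)-f(w_u)\bigr|}{1+\|w_u\|^3}
=\frac{\bigl|\mathbb{E}\,\|w_ue^{-u}+\sigma(u)Z\|^3-\|w_u\|^3\bigr|}{1+R^3}.
\]
Using $\|w_ue^{-u}+\sigma(u)Z\|\ge e^{-u}\|w_u\|-\sigma(u)\|Z\|$ is too lossy; instead I would localise: at the time $t^\star$ where $|w_u(t^\star)|=R$, the random variable $w_u(t^\star)e^{-u}+\sigma(u)Z(t^\star)$ is Gaussian with mean $Re^{-u}$ and variance $\sigma^2(u)t^\star$, so $\|w_ue^{-u}+\sigma(u)Z\|\ge |Re^{-u}+\sigma(u)Z(t^\star)|$ and hence, by Jensen together with a direct second-moment computation,
\[
\mathbb{E}\,\|w_ue^{-u}+\sigma(u)Z\|^3\ge \mathbb{E}\,|Re^{-u}+\sigma(u)Z(t^\star)|^3\ge \bigl(Re^{-u}\bigr)^3 + c\,Re^{-u}\,\sigma^2(u)t^\star
\]
for an absolute constant $c>0$. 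Choosing $t^\star$ bounded below (say $t^\star=1$) and $R=R(u)=u^{-1}$, one gets $\sigma^2(u)\sim 2u$, $Re^{-u}\sim u^{-1}$, so the lower-order term is $\gtrsim u^{-1}\cdot u = 1$ while the leading cubic terms satisfy $R^3-(Re^{-u})^3=R^3(1-e^{-3u})\sim 3u\cdot u^{-3}=3u^{-2}$, which already does \emph{not} divide out by $1+R^3\sim u^{-3}$: indeed $3u^{-2}/u^{-3}=3u\to0$ — so that contribution alone is not enough, and I would instead push $R$ larger, e.g. $R(u)=u^{-1-\alpha}$ for suitable $\alpha>0$, to make $R^3(1-e^{-3u})/R^3 = 1-e^{-3u}$ — wait, that ratio is independent of $R$ and tends to $0$. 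The upshot of this scratch computation is that the \emph{mean-shift} term cannot do the job on its own; the divergence must come from the genuine fluctuation of the supremum.

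**The real mechanism**, and the step I expect to be the main obstacle, is therefore to show that $\mathbb{E}\,\|w_ue^{-u}+\sigma(u)Z\|^3$ exceeds $\|w_u e^{-u}\|^3$ by an amount comparable to $R^3$ (not merely $R^2u$) for an appropriately chosen path, \emph{uniformly in} $u$ near $0$. The idea is to choose $w_u$ not with a single sharp maximum but \emph{flat} near its maximum — e.g. $w_u(t)=R$ for all $t\in[0,1]$, the constant path of height $R$ — so that $\|w_ue^{-u}+\sigma(u)Z\|=\sup_{t\in[0,1]}|Re^{-u}+\sigma(u)Z(t)|=Re^{-u}+\sigma(u)\sup_{t}Z(t)$ (on the event where the sup stays positive, which has probability bounded below uniformly), and then
\[
\mathbb{E}\,\|w_ue^{-u}+\sigma(u)Z\|^3\ge \mathbb{E}\bigl[\bigl(Re^{-u}+\sigma(u)M\bigr)^3\,\mathbb{1}_{M>0}\bigr],\qquad M:=\sup_{t\in[0,1]}Z(t),
\]
and $M$ has the half-normal law with $\mathbb{E}[M^j]$ finite and positive. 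Expanding, the cross term $3(Re^{-u})^2\sigma(u)\,\mathbb{E}[M\mathbb{1}_{M>0}]$ is of order $R^2\sigma(u)\sim R^2\sqrt u$, and dividing by $1+R^3$ gives $\sim\sqrt u/R$, which still vanishes. It now becomes clear that with the cubic normalisation the Ornstein–Uhlenbeck flow genuinely \emph{contracts}, so the failure of strong continuity on $(L,\|\cdot\|_L)$ must be located at $u\to0$ \emph{together with $\|w\|\to\infty$ at a critically matched rate}, and must use a path that is not scale-invariant — for example by inserting into $f$ a function that is \emph{not} homogeneous, such as $f(w)=\psi(w(1))$ with $\psi$ a bounded, uniformly continuous but not Lipschitz function times $(1+\|w\|^2)$, i.e. $f(w)=(1+\|w\|^2)\,\psi(w(1))$, so that the second-order (in $\|w\|$) prefactor now gets hit by the genuinely discontinuous modulus of continuity of $\psi$. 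The key step will be to verify that $f\in L$ (the prefactor $1+\|w\|^2$ times a bounded $\psi$ is $O(1+\|w\|^2)=O(1+\|w\|^3)$), and then to produce points $w$ with $\|w\|$ large where $|\mathbb{E}\psi(w(1)e^{-u}+\sigma(u)Z(1))-\psi(w(1))|$ stays bounded away from $0$ — this is precisely the classical non-strong-continuity of the one-dimensional Ornstein–Uhlenbeck semigroup on $C_b(\mathbb{R})$ cited from \cite{daprato1995ornstein}, now amplified by the $(1+\|w\|^2)$ factor against the $(1+\|w\|^3)$ denominator, the ratio of which can be kept bounded below by taking $\|w\|$ bounded. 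Thus the final structure of the proof is: (i) recall that there exist $\psi\in C_b(\mathbb{R})$ and $x_0\in\mathbb{R}$ with $\limsup_{u\downarrow0}|\mathbb{E}\psi(x_0e^{-u}+\sigma(u)Z(1))-\psi(x_0)|=\delta>0$; (ii) set $f(w)=\psi(w(1))$, which trivially lies in $L$ with $\|f\|_L\le\|\psi\|_\infty$; (iii) then $\|T_uf-f\|_L\ge |(T_uf)(\hat w)-f(\hat w)|/(1+\|\hat w\|^3)$ where $\hat w$ is the path constantly equal to $x_0$, giving $\limsup_{u\downarrow0}\|T_uf-f\|_L\ge \delta/(1+|x_0|^3)>0$, contradicting strong continuity. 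The genuinely delicate point — and the only real work — is step (i): extracting, from the material of \cite{daprato1995ornstein} or by an explicit Gaussian computation with a suitable rapidly-oscillating bounded $\psi$, a concrete $\psi$ and $x_0$ witnessing the failure of pointwise convergence, which I would do by taking $\psi(x)=\sin(x^2)$ or $\psi(x)=\cos(e^{|x|})$ and showing the Gaussian smoothing by an infinitesimal-variance kernel around $x_0$ nonetheless shifts the value by a non-infinitesimal amount because $\psi$ oscillates faster than any modulus of continuity permits.
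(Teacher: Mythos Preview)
Your plan is fundamentally flawed: you propose to find a single $f\in L$ and a \emph{fixed} $w\in D$ at which $T_uf(w)\not\to f(w)$, but the semigroup is in fact pointwise continuous on $L$. For any $f\in L$ and any fixed $w$, one has $we^{-u}+\sigma(u)Z\to w$ almost surely in sup norm, hence $f(we^{-u}+\sigma(u)Z)\to f(w)$ by continuity of $f$, and then $T_uf(w)\to f(w)$ by dominated convergence with majorant $\|f\|_L\bigl(1+(\|w\|+\|Z\|)^3\bigr)$. (This pointwise continuity is exactly what the paper exploits in Section~4 to salvage Barbour's argument.) In particular your step~(i) is false: for every $\psi\in C_b(\mathbb{R})$ and every fixed $x_0$, $\mathbb{E}\psi(x_0e^{-u}+\sigma(u)Z(1))\to\psi(x_0)$. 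The result of \cite{daprato1995ornstein} concerns the \emph{supremum} over $x$, not a single $x_0$; the witnessing points necessarily run off to infinity as $u\downarrow0$.

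You actually identified the correct mechanism in your middle paragraph --- ``$u\to0$ together with $\|w\|\to\infty$ at a critically matched rate'' --- before abandoning it. The paper's proof follows precisely that idea: take $f(w)=(1+\|w\|^3)\sin(\|w\|)$, decompose $\|T_uf-f\|_L$, and show that everything vanishes uniformly except $\sup_w|\sin(e^{-u}\|w\|)-\sin(\|w\|)|$. This supremum stays $\ge1$ along the matched sequences $w_k\equiv k\pi$, $u_k=-\log\bigl(1-\tfrac{1}{2k}\bigr)$, since then $e^{-u_k}\|w_k\|=k\pi-\tfrac{\pi}{2}$. Note that the operative mechanism is the \emph{drift} factor $e^{-u}$, not the noise: it shifts $\|w\|$ by an amount of order $u\|w\|$, which is of order $1$ when $\|w\|\sim 1/u$, and the bounded oscillation of $\sin$ converts this shift into a non-vanishing difference. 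Your attempts with $\phi(r)=r^3$ failed because a pure power is too smooth to detect this drift after normalising by $1+\|w\|^3$; the oscillating factor is essential.
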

\begin{proof}
Consider $f\in L$ defined by:
$$f(w)=(1+\|w\|^3)\sin\left(\|w\|\right).$$
Note that:
\begin{align}
\|T_uf-f\|_L=&\sup_{w\in D}\frac{\left|\mathbb{E}(1+\|we^{-u}+\sigma(u)Z\|^3)\sin(\|we^{-u}+\sigma(u)Z\|)-(1+\|w\|^3)\sin(\|w\|)\right|}{1+\|w\|^3}\nonumber\\
=&\sup_{w\in D}\left|\mathbb{E}\sin(\|we^{-u}+\sigma(u)Z\|)-\sin(\|w\|)\right.\nonumber\\
&\left.+\frac{\mathbb{E}\left[\left(\|we^{-u}+\sigma(u)Z\|^3-\|w\|^3\right)\sin(\|we^{-u}+\sigma(u)Z\|)\right]}{1+\|w\|^3}\right|\nonumber\\
\geq&\sup_{w\in D}\left|\mathbb{E}\sin(\|we^{-u}+\sigma(u)Z\|)-\sin(\|w\|)\right|\nonumber\\
&-\sup_{w\in D}\left|\frac{\mathbb{E}\left[\left(\|we^{-u}+\sigma(u)Z\|^3-\|w\|^3\right)\sin(\|we^{-u}+\sigma(u)Z\|)\right]}{1+\|w\|^3}\right|\nonumber\\
\geq& \sup_{w\in D}\left|\sin(e^{-u}\|w\|)-\sin(\|w\|)\right|-\sup_{w\in D}\left|\mathbb{E}\sin(\|we^{-u}+\sigma(u)Z\|)-\sin(e^{-u}\|w\|)\right|\nonumber\\
&-\sup_{w\in D}\left|\frac{\mathbb{E}\left[\left(\|we^{-u}+\sigma(u)Z\|^3-\|w\|^3\right)\sin(\|we^{-u}+\sigma(u)Z\|)\right]}{1+\|w\|^3}\right|\label{1}.
\end{align}
Now:
\begin{align}
&\sup_{w\in D}\left|\frac{\mathbb{E}\left[\left(\|we^{-u}+\sigma(u)Z\|^3-\|w\|^3\right)\sin(\|we^{-u}+\sigma(u)Z\|)\right]}{1+\|w\|^3}\right|\nonumber\\
\leq&\sup_{w\in D}\frac{\mathbb{E}\left|\left(\|we^{-u}+\sigma(u)Z\|-\|w\|\right)\left(\|we^{-u}+\sigma(u)Z\|^2+\|we^{-u}+\sigma(u)Z\|\|w\|+\|w\|^2\right)\right|}{1+\|w\|^3}\nonumber\\
\leq& \sup_{w\in D}\frac{\mathbb{E}\left[\left(\|w\|(1-e^{-u})+\sigma(u)\|Z\|\right)\left(\|w\|^2(2e^{-2u}+e^{-u}+1)+\sigma(u)\|Z\|\|w\|+2\sigma^2(u)\|Z\|^2\right)\right]}{1+\|w\|^3}\nonumber\\
=&\sup_{w\in D}\frac{1}{1+\|w\|^3}\cdot\left\lbrace\|w\|^3(1-e^{-u})(2e^{-2u}+e^{-u}+1)+\|w\|^2\mathbb{E}\|Z\|\sigma(u)\left[2e^{-2u}+2\right]\right.\nonumber\\
&\left.+\|w\|\sigma^2(u)\mathbb{E}\|Z\|^2\left[2(1-e^{-u})+1\right]+2\sigma^3(u)\mathbb{E}\|Z\|^3\right\rbrace\xrightarrow{u\searrow 0}0.\label{2}
\end{align}

Furthermore, given $\epsilon>0$, consider $R>0$ such that $\mathbb{P}(\|Z\|>R)<\epsilon$. Fix $\delta>0$, such that for any $a,b\in\mathbb{R}$: $|a-b|<\delta\Rightarrow|\sin(a)-\sin(b)|<\epsilon$. Now, for any $u$ such that $\sigma(u)R<\delta$ and for every $w\in D$, we have:
$$\|Z\|\leq R\Longrightarrow\left|\|we^{-u}+\sigma(u)Z\|-e^{-u}\|w\|\right|\leq \sigma(u)\|Z\|<\delta$$
and so:
\begin{align*}
&\left|\mathbb{E}\sin(\|we^{-u}+\sigma(u)Z\|)-\sin(e^{-u}\|w\|)\right|\\
\leq&\mathbb{E}\left|\sin(\|we^{-u}+\sigma(u)Z\|)-\sin(e^{-u}\|w\|)\right|\mathbb{1}\left[\|Z\|\leq R\right]\\
&+\mathbb{E}\left|\sin(\|we^{-u}+\sigma(u)Z\|)-\sin(e^{-u}\|w\|)\right|\mathbb{1}\left[\|Z\|> R\right]\\
\leq &\epsilon + 2\epsilon.
\end{align*}
Therefore:
\begin{align}
\sup_{w\in D}\left|\mathbb{E}\sin(\|we^{-u}+\sigma(u)Z\|)-\sin(e^{-u}\|w\|)\right|\xrightarrow{u\searrow 0} 0\label{3}.
\end{align}

Finally, for any $k\in \mathbb{N}$, consider $w_k\in D$ defined by $w_k(t)=k\pi$. For $u_k=-\log\left(1-\frac{1}{2k}\right)\xrightarrow{k\to\infty} 0$, we have:
$$\left|\sin(e^{-u_k}\|w\|)-\sin(\|w\|)\right|=\left|\sin\left(k\pi-\frac{\pi}{2}\right)-\sin(k\pi)\right|=1.$$
Therefore:
\begin{align}
\exists(u_k)_{k=1}^\infty:\quad u_k\xrightarrow{k\to\infty}0\quad\text{and}\quad \sup_{w\in D}\left|\sin(e^{-u_k}\|w\|)-\sin(\|w\|)\right|\geq 1.\label{4}
\end{align}

By (\ref{1}), (\ref{2}), (\ref{3}), (\ref{4}), $\lim_{u\to 0} \|T_uf-f\|_L\neq 0$ and so $T_u$ is not strongly continuous on $(L,\|\cdot\|_L)$.
\end{proof}
\section{Solution to the Stein equation}\label{sol}
We first show that the function, which in Lemma \ref{lemma3} is shown to solve the Stein equation, exists and belongs to the domain of $\mathcal{A}$.
\begin{lemma}\label{lemma2}
For any $g\in M$, such that $\mathbb{E}[g(Z)]=0$, $f=\phi(g)=-\int_0^\infty T_ugdu$ exists and is in the domain of $\mathcal{A}$.
\end{lemma}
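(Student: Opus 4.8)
The plan is to verify three things in turn: (i) the improper integral defining $f$ converges pointwise and $f\in L$; (ii) $f$ is twice Fréchet differentiable, with $Df$ and $D^2f$ obtained by differentiating under the integral sign; and (iii) consequently $\mathcal{A}f(w)$ is a well-defined (finite) quantity at every $w\in D$, so that $f$ lies in the domain of $\mathcal{A}$. In fact one gets the stronger conclusion $f\in M$ with $\|f\|_M\le C\|g\|_M$, which is what later lemmas will need.

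For (i), the point is that the hypothesis $\mathbb{E}[g(Z)]=0$ forces $T_ug$ to decay exponentially as $u\to\infty$. Letting $Z'$ be an independent copy of $Z$ and using the stationarity identity $Z'e^{-u}+\sigma(u)Z\stackrel{d}{=}Z$ (valid because $e^{-2u}+\sigma^2(u)=1$), one rewrites
\[
T_ug(w)=\mathbb{E}\left[g\left(we^{-u}+\sigma(u)Z\right)-g\left(Z'e^{-u}+\sigma(u)Z\right)\right],
\]
and a first-order Taylor (fundamental-theorem-of-calculus) expansion in the direction $e^{-u}(w-Z')$, together with the growth bound $\|Dg(v)\|\le\|g\|_M(1+\|v\|^2)$ coming from the definition of $\|\cdot\|_M$, yields $|T_ug(w)|\le C\|g\|_M e^{-u}(1+\|w\|^3)$ for a constant $C$ depending only on the moments $\mathbb{E}\|Z\|^j$, $j\le 3$. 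Hence $\int_0^\infty|T_ug(w)|\,du\le C\|g\|_M(1+\|w\|^3)<\infty$ for each $w$, so $f$ is well-defined pointwise; the same bound and dominated convergence (each $T_ug$ being continuous in $w$, again by dominated convergence applied to the continuous $g$) give that $f$ is continuous, and $|f(w)|\le C\|g\|_M(1+\|w\|^3)$ shows $f\in L$ with $\|f\|_L\le C\|g\|_M$.

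For (ii), I would first check that each $T_ug$ is twice Fréchet differentiable, with, by the chain rule,
\begin{align*}
D(T_ug)(w)[h]&=e^{-u}\,\mathbb{E}\left[Dg\left(we^{-u}+\sigma(u)Z\right)[h]\right],\\
D^2(T_ug)(w)[h,h]&=e^{-2u}\,\mathbb{E}\left[D^2g\left(we^{-u}+\sigma(u)Z\right)[h,h]\right];
\end{align*}
here differentiation under $\mathbb{E}$ is justified by dominating the difference quotients via the polynomial growth of $Dg$ and $D^2g$ built into $\|g\|_M$, and the passage from Gateaux to Fréchet differentiability follows from the continuity of those derivatives. The bounds $\|D(T_ug)(w)\|\le C\|g\|_M e^{-u}(1+\|w\|^2)$ and $\|D^2(T_ug)(w)\|\le C\|g\|_M e^{-2u}(1+\|w\|)$ then make $\int_0^\infty D(T_ug)(w)\,du$ and $\int_0^\infty D^2(T_ug)(w)\,du$ absolutely convergent, and a Taylor-remainder argument — bounding $T_ug(w+h)-T_ug(w)-D(T_ug)(w)[h]$ by a multiple of $\sup_{\theta\in[0,1]}\|D^2(T_ug)(w+\theta h)\|\,\|h\|^2$, which is integrable in $u$ with the $e^{-2u}$ weight, and similarly one level up using the Lipschitz bound on $D^2g$ contained in the last term of $\|g\|_M$ — lets me interchange $\int_0^\infty du$ with the Fréchet derivative, giving $Df(w)=-\int_0^\infty D(T_ug)(w)\,du$ and $D^2f(w)=-\int_0^\infty D^2(T_ug)(w)\,du$. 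Carrying the exponential weights through the $u$-integral yields $\|Df(w)\|\le C\|g\|_M(1+\|w\|^2)$, $\|D^2f(w)\|\le C\|g\|_M(1+\|w\|)$ and $\|D^2f(w+h)-D^2f(w)\|\le C\|g\|_M\|h\|$, i.e.\ $f\in M$.

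For (iii), it remains to observe that for such an $f$ the right-hand side of the definition of $\mathcal{A}$ makes sense at every $w$: $-Df(w)[w]$ is finite by the bound just obtained, and $\mathbb{E}\,D^2f(w)[Z^{(2)}]$ is finite because $|D^2f(w)[Z,Z]|\le\|D^2f(w)\|\,\|Z\|^2$ and $\mathbb{E}\|Z\|^2<\infty$; the series representation $\mathbb{E}\,D^2f(w)[Z^{(2)}]=\sum_{k\ge0}D^2f(w)[S_k^{(2)}]$ holds since the Lévy--Ciesielski partial sums $\sum_{k\le N}Y_kS_k$ (the $Y_k$ i.i.d.\ standard normal) converge uniformly almost surely to $Z$, $D^2f(w)$ is a bounded bilinear form on $(C[0,1],\|\cdot\|)$, and dominated convergence applies. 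Thus $\mathcal{A}f(w)$ is defined for every $w$ and $f$ is in the domain of $\mathcal{A}$. I expect step (ii) to be the main obstacle: one must carefully justify both the differentiation under the expectation defining $T_ug$ and the interchange of the (second) Fréchet derivative with the improper integral $\int_0^\infty du$, and it is precisely the cancellation furnished by $\mathbb{E}[g(Z)]=0$ (via the coupling in step (i)) together with the extra factor $e^{-u}$ gained from the chain rule at each differentiation that make all the integrals in sight converge.
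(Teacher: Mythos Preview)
Your argument is correct and follows the same overall strategy as the paper: exploit $\mathbb{E}g(Z)=0$ together with the $M$-norm control on $Dg$ to get exponential integrability of $u\mapsto T_ug(w)$, then differentiate under the integral to conclude $f\in M$ and hence $f\in\mathrm{dom}(\mathcal{A})$. The one notable difference is in step (i): the paper splits
\[
T_ug(w)=\mathbb{E}\bigl[g(we^{-u}+\sigma(u)Z)-g(\sigma(u)Z)\bigr]+\mathbb{E}\bigl[g(\sigma(u)Z)-g(Z)\bigr]
\]
and bounds each piece separately (the first yielding an $e^{-u}\|w\|$ factor, the second a $1-\sigma(u)$ factor), whereas you use the stationarity coupling $e^{-u}Z'+\sigma(u)Z\stackrel{d}{=}Z$ to write $T_ug(w)$ as a single increment $\mathbb{E}[g(we^{-u}+\sigma(u)Z)-g(Z'e^{-u}+\sigma(u)Z)]$ and extract the $e^{-u}$ decay in one stroke. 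Your route is a bit cleaner; the paper's is a bit more elementary (no auxiliary $Z'$ needed). For step (ii) the paper simply cites equations (2.23)--(2.24) of Barbour, which are precisely the dominated-convergence justification you sketch in more detail.
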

\begin{proof}
Note that:
\begin{equation}\label{condition}
|g(w)-g(x)|\leq C_g(1+\|w\|^2+\|x\|^2)\|w-x\|
\end{equation}
uniformly in $w,x\in D[0,1]$. This follows from the fact that:
\begin{align*}
&|g(w)-g(x)|\leq \|g\|_M\|w-x\|^3+\left|Dg(x)[w-x]+\frac{1}{2}D^2g(x)[w-x,w-x]\right| \\
\leq& \|g\|_M\|w-x\|^3+\|Dg(x)\|\|w-x\|+\frac{1}{2}\|D^2g(x)\|\|w-x\|^2 \\
\leq& \|g\|_M\|w-x\|\left(\|w-x\|^2+1+\|x\|^2+\frac{1}{2}\|w-x\|(1+\|x\|)\right)\\
\leq& \|g\|_M\|w-x\|\left(2\|w\|^2+2\|x\|^2+1+\|x\|^2+\frac{1}{2}(\|w\|+\|x\|+\|w\|\|x\|+\|x\|^2)\right)\\
\leq& C_g(1+\|w\|^2+\|x\|^2)\|w-x\|
\end{align*}
uniformly in $w,x$ because $\|w\|\leq 1+\|w\|^2$, $\|x\|\leq 1+\|x\|^2$ and $\|w\|\|x\|\leq \|w\|^2+\|x\|^2$.
Now, we note that, as a consequence of (\ref{condition}), we have:
\begin{align}
&\lim_{t\to\infty}\int_0^t\left|T_ug(w)\right|du=\lim_{t\to\infty}\int_0^t\left|\mathbb{E}g(we^{-u}+\sigma(u)Z)\right|du\nonumber\\
\leq&\lim_{t\to\infty}\left[\int_0^t\left|\mathbb{E}\left[g(we^{-u}+\sigma(u)Z)-g(\sigma(u)Z)\right]\right|du+\int_0^t\left|\mathbb{E}[g(\sigma(u)Z)-g(Z)]\right|du\right]\nonumber\\
\leq &C_g\lim_{t\to\infty}\left[\int_0^t\mathbb{E}\left[\left(1+\|e^{-u}w+\sigma(u)Z\|^2+\sigma^2(u)\|Z\|^2\right)e^{-u}\|w\|\right]du\right.\nonumber\\
&\left.+\int_0^t\mathbb{E}\left|(1+(\sigma^2(u)+1)\|Z\|^2)\right|\left\|(\sigma(u)-1)Z\right\|du\right] \nonumber\\
\leq& C_g\lim_{t\to\infty}\left[\int_0^t\left[ e^{-u}\|w\|+2e^{-3u}\|w\|^3+3\sigma^2(u)e^{-u}\|w\|\mathbb{E}\|Z\|^2\right]du\right.\nonumber\\
&\left.+\int_0^t(\sigma(u)-1)\mathbb{E}\left|(1+(\sigma^2(u)+1)\|Z\|^2)\right|\left\|Z\right\|du\right]\nonumber\\
\leq&C(1+\|w\|^3)\text{,}
\label{4.77}
\end{align}
for some constant $C$.  Since $L$ is complete, this guarantees the existence of $\phi(g)$.

As noted in (2.23) and (2.24) of \cite{diffusion}, dominated convergence may be used, because of (\ref{4.77}) to obtain that:
\begin{equation}\label{4.800}
D^k\phi(g)(w)=-\int_0^\infty e^{-ku}D^kg(we^{-u}+\sigma(u)Z)du,\quad k=1,2.
\end{equation}
and, as a consequence, that $\phi(g)\in M$. This is enough to conclude that $\phi(g)$ belongs to the domain of $\mathcal{A}$ by the observation directly above the formulation of $\mathcal{A}$ labelled as (2.9) in \cite{diffusion}.
\end{proof}
\begin{remark}\label{remark}
The argument of (2.23) and (2.24) in \cite{diffusion} also readily gives that for any $g\in M$ and $t>0$: $\int_0^t T_ugdu\in M$.
\end{remark}
We now prove that observation (2.19) of \cite{diffusion} is true for all $g\in M$:
\begin{lemma}\label{lemma3} For all $t>0$ and for all $g\in M$:
\begin{equation}\label{4.100}
 T_{t}g-g=\mathcal{A}\left(\int_0^t T_ugdu\right)\text{.}
\end{equation}
\end{lemma}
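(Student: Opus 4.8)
The plan is to mimic the standard semigroup identity $T_t g - g = \mathcal{A}\int_0^t T_u g\, du$ by working with the explicit representation $T_u g(w) = \mathbb{E}g(we^{-u}+\sigma(u)Z)$, but to be careful not to invoke strong continuity anywhere. First I would establish the infinitesimal relation: for $g\in M$, show that
\begin{equation*}
\lim_{h\searrow 0}\frac{T_h g(w) - g(w)}{h} = \mathcal{A}g(w)
\end{equation*}
pointwise in $w$. This is a Taylor expansion computation: write $we^{-h}+\sigma(h)Z = w + (e^{-h}-1)w + \sigma(h)Z$, expand $g$ to second order using that $g\in M$ (so $D^2 g$ is Lipschitz with the stated weighted bound), take expectations, and use $e^{-h}-1 = -h + o(h)$, $\sigma^2(h) = 2h + o(h)$, together with $\mathbb{E}Z = 0$ and $\mathbb{E}D^2 g(w)[Z^{(2)}] = \sum_{k\geq 0} D^2 g(w)[S_k^{(2)}]$. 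The cross term $D^2 g(w)[(e^{-h}-1)w, \sigma(h)Z]$ has expectation zero, and the remainder from the Lipschitz bound on $D^2 g$ is $O(h^{3/2})$ times a polynomial in $\|w\|$ and moments of $\|Z\|$, hence $o(h)$. This yields $\lim_{h\searrow 0} h^{-1}(T_h g - g)(w) = \mathcal{A}g(w)$ for each fixed $w$.

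Next I would prove the semigroup property $T_{u+v} = T_u T_v$ on $M$ (or at least applied to $g$), which is immediate from the representation: $T_v g(w) = \mathbb{E}g(we^{-v}+\sigma(v)Z)$, and then $T_u(T_v g)(w) = \mathbb{E}_{Z'}[T_v g(we^{-u}+\sigma(u)Z')] = \mathbb{E}_{Z,Z'} g\big((we^{-u}+\sigma(u)Z')e^{-v} + \sigma(v)Z\big)$, and one checks $e^{-u}\sigma(v)Z' + \sigma(v)... $ — more precisely that $\sigma(u)e^{-v}Z' + \sigma(v)Z$ is equal in distribution to $\sigma(u+v)Z$, using $\sigma^2(u)e^{-2v} + \sigma^2(v) = e^{-2v} - e^{-2(u+v)} + 1 - e^{-2v} = 1 - e^{-2(u+v)} = \sigma^2(u+v)$ and the fact that an independent sum of scaled Brownian motions is again a scaled Brownian motion. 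I also need that $T_u$ maps $M$ into $M$ with a norm bound, and that $u\mapsto T_u g(w)$ is continuous in $u$ for each fixed $w$ (pointwise, not uniform — this is the substitute for strong continuity), which follows from dominated convergence applied to the representation together with the growth bound (\ref{condition}) and finiteness of moments of $\|Z\|$.

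With these ingredients, the identity follows as in \cite[Proposition 9]{ethier}, done by hand. Fix $w$ and $t>0$. By Remark \ref{remark}, $\int_0^t T_u g\, du \in M$, so $\mathcal{A}$ applies to it, and one can interchange $\mathcal{A}$ (which is built from $D$, $D^2$ and the sum over $S_k$) with $\int_0^t \cdot\, du$ using (\ref{4.800})-type dominated-convergence arguments; thus $\mathcal{A}\int_0^t T_u g\, du(w) = \int_0^t \mathcal{A}(T_u g)(w)\, du$. Then for fixed $u$, using the semigroup property and the infinitesimal relation applied to $T_u g \in M$,
\begin{equation*}
\mathcal{A}(T_u g)(w) = \lim_{h\searrow 0}\frac{T_{u+h}g(w) - T_u g(w)}{h} = \frac{d}{du}\, T_u g(w),
\end{equation*}
where the derivative is at least the right derivative; one also checks the left derivative agrees, or argues directly that $u\mapsto T_u g(w)$ is $C^1$ with this derivative (the map $u\mapsto \mathcal{A}(T_u g)(w)$ is continuous by the pointwise continuity of $u\mapsto T_u g(w)$ in the appropriate sense and dominated convergence). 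Integrating from $0$ to $t$ via the fundamental theorem of calculus gives $\int_0^t \mathcal{A}(T_u g)(w)\, du = T_t g(w) - T_0 g(w) = T_t g(w) - g(w)$, which is the claim.

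The main obstacle is the justification of interchanging $\mathcal{A}$ with $\int_0^t \cdot\, du$ and the surrounding dominated-convergence bookkeeping: one must produce uniform-in-$u$ (or at least integrable-in-$u$) bounds on $D^k(T_u g)(w)$ and on the tails of the $S_k$-series that are polynomial in $\|w\|$, exactly as in the estimates (\ref{4.77})-(\ref{4.800}) of Lemma \ref{lemma2}, and verify that the difference quotients defining $\frac{d}{du}T_u g(w)$ converge in a way compatible with integration in $u$ — this is precisely where \cite{diffusion} implicitly used strong continuity, and the point of the argument is that pointwise continuity plus these explicit growth estimates suffice.
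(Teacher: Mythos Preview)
Your proposal is correct but takes a different route from the paper. The paper (following Ethier--Kurtz, Proposition~1.5) applies the difference quotient $\tfrac{1}{h}(T_h - I)$ directly to the integral $\int_0^t T_u g\,du$, interchanges $T_h$ with $\int_0^t$ (this is just Fubini, since $T_h$ is an expectation), and telescopes: $\tfrac{1}{h}\int_0^t (T_{u+h}g - T_u g)\,du = \tfrac{1}{h}\int_t^{t+h} T_u g\,du - \tfrac{1}{h}\int_0^h T_u g\,du$. The left side tends to $\mathcal{A}\int_0^t T_u g\,du$ pointwise (since $\int_0^t T_u g\,du\in M$ and the infinitesimal relation holds for any $M$-function, exactly as you establish first), while the right side tends to $T_t g - g$ by the pointwise continuity of $u\mapsto T_u g(w)$, checked via an explicit mean-value bound. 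Your route instead commutes the explicit differential operator $\mathcal{A}$ with $\int_0^t$, identifies $\mathcal{A}(T_u g)$ with $\tfrac{d}{du}T_u g$, and integrates via the fundamental theorem of calculus. Both arguments rest on the same pointwise infinitesimal relation and the same growth estimates; the paper's telescoping trick simply sidesteps the step you flag as ``the main obstacle'' (commuting $\mathcal{A}$ with the time integral and verifying the two-sided derivative for FTC), whereas your approach makes that commutation explicit --- which is the route the paper in fact uses later, in proving Proposition~\ref{proposition}.
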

\begin{proof}
We will follow the steps of the proof of Proposition 1.5 on p. 9 of \cite{ethier}. Observe that for all $w\in D[0,1]$ and $h>0$:
\begin{align}
&\frac{1}{h} [T_h-I]\int_0^tT_ug(w)du=\frac{1}{h}\int_0^t[T_{u+h}g(w)-T_ug(w)]du\nonumber\\
=&\frac{1}{h}\int_t^{t+h}T_ug(w)du-\frac{1}{h}\int_0^hT_ug(w)du\nonumber\\
\stackrel{(\ref{semigroup})}=&\frac{1}{h}\int_t^{t+h}\mathbb{E}[g(w e^{-u}+\sigma(u)Z)]du-\frac{1}{h}\int_0^{h}\mathbb{E}[g(w e^{-u}+\sigma(u)Z)]du.
&\label{4.600}
\end{align}
Taking $h\to 0$ on the left-hand side gives $\mathcal{A}\left(\int_0^t T_ug(w)du\right)$, since $\int_0^t T_ug(w)du$ belongs to the domain of $\mathcal{A}$ by Lemma \ref{lemma2} and Remark \ref{remark}. In order to analyse the right-hand side note that:
\begin{align}
&\left|\frac{1}{h}\int_0^h\mathbb{E}[g(w e^{-u}+\sigma(u)Z)]-g(w)du\right|\nonumber\\
\stackrel{\text{MVT}}\leq &\frac{1}{h}\int_0^h\mathbb{E}\left[\|w(e^{-u}-1)+\sigma(u)Z\|\sup_{c\in [0,1]}\|Dg\left(cw+(1-c)(we^{-u}+\sigma(u)Z)\right)\|\right]du\nonumber\\
\leq&\frac{\|g\|_M}{h}\int_0^h\mathbb{E}\left[\left(\|w\|(1-e^{-u})+\sigma(u)\|Z\|\right)\left(1+3\|w\|^2+3\|w\|^2e^{-2u}+3\sigma^2(u)\|Z\|^2\right)\right]du\nonumber\\
=&\frac{\|g\|_M}{h}\mathbb{E}\left\lbrace\left(1+3\|w\|^2+3\|Z\|^2\right)\left(\|w\|(-1+h+\cosh(h)-\sinh(h))\right.\right.\nonumber\\
&\left.+\|Z\|e^{-h}(-\sqrt{e^{2h}-1}+e^h(h+\log(1+e^{-h}\sqrt{-1+e^{2h}}))\right)\nonumber\\
&+3\|w\|(\|w\|^2-\|Z\|^2)\left(\frac{e^{-3h}}{6}(e^h-1)^2(e^h+2)\right)\nonumber\\
&+\left.3(\|w\|^2\|Z\|-\|Z\|^3)\frac{1}{3}\left(\sqrt{1-e^{-2h}}-\sqrt{e^{-6h}(e^{2h}-1)}\right)\right\rbrace\xrightarrow{h\to 0}0.
\label{4.200}
\end{align}

Similarly:
$$\left|\frac{1}{h}\int_t^{t+h}\mathbb{E}\left[g(we^{-u}+\sigma(u)Z)\right]du-\mathbb{E}\left[g(we^{-t}+\sigma(t)Z)\right]\right|\xrightarrow{h\to 0}0.$$
Therefore, as $h\to 0$, the right-hand side of (\ref{4.600}) converges to $T_tg-g$, which finishes the proof.
\end{proof}

\begin{proposition}\label{proposition}
For any $g\in M$, such that $\mathbb{E}g(Z)=0$, $f=\phi(g)=-\int_0^\infty T_ugdu$ solves the Stein equation:
$$\mathcal{A}f=g.$$
\end{proposition}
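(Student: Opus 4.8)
The plan is to pass to the limit $t\to\infty$ in the identity $(\ref{4.100})$ of Lemma \ref{lemma3}, $T_tg-g=\mathcal{A}\left(\int_0^t T_ug\,du\right)$, and to check that for each fixed $w\in D[0,1]$ the left-hand side converges to $\mathbb{E}g(Z)-g(w)=-g(w)$, while the right-hand side converges to $-\mathcal{A}\phi(g)(w)$; comparing the two limits then gives $\mathcal{A}\phi(g)=g$, that is, $\mathcal{A}f=g$.

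For the left-hand side, fix $w$ and note that $T_tg(w)=\mathbb{E}g(we^{-t}+\sigma(t)Z)$ with $\|we^{-t}+\sigma(t)Z-Z\|\le e^{-t}\|w\|+(1-\sigma(t))\|Z\|\to 0$ almost surely. By the Lipschitz-type bound $(\ref{condition})$, $g(we^{-t}+\sigma(t)Z)\to g(Z)$ almost surely, and it is dominated by the integrable random variable $|g(Z)|+C_g(1+(\|w\|+\|Z\|)^2+\|Z\|^2)(\|w\|+\|Z\|)$ (integrability because $\mathbb{E}\|Z\|^3<\infty$), so dominated convergence gives $T_tg(w)\to\mathbb{E}g(Z)=0$.

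The substantive step is the right-hand side. Write $f_t:=\int_0^t T_ug\,du\in M$ (Remark \ref{remark}) and use the representation $\mathcal{A}f_t(w)=-Df_t(w)[w]+\mathbb{E}D^2f_t(w)[Z^{(2)}]$. Differentiating under the integral sign as in (2.23)--(2.24) of \cite{diffusion}, $D^kf_t(w)=\int_0^t e^{-ku}\mathbb{E}D^kg(we^{-u}+\sigma(u)Z)\,du$ for $k=1,2$, so by $(\ref{4.800})$, $D^kf_t(w)\to-D^k\phi(g)(w)$ as $t\to\infty$. Moreover the bounds $\|Dg(x)\|\le\|g\|_M(1+\|x\|^2)$ and $\|D^2g(x)\|\le\|g\|_M(1+\|x\|)$ give, uniformly in $t$, $\|Df_t(w)\|\le C_1(1+\|w\|^2)$ and $\|D^2f_t(w)\|\le C_2(1+\|w\|)$; hence $|D^2f_t(w)[Z,Z]|\le C_2(1+\|w\|)\|Z\|^2$ is dominated by an integrable random variable (since $\mathbb{E}\|Z\|^2<\infty$ by Doob's inequality). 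Dominated convergence then yields $\mathbb{E}D^2f_t(w)[Z^{(2)}]\to-\mathbb{E}D^2\phi(g)(w)[Z^{(2)}]$, while $Df_t(w)[w]\to-D\phi(g)(w)[w]$ is immediate, so $\mathcal{A}f_t(w)\to D\phi(g)(w)[w]-\mathbb{E}D^2\phi(g)(w)[Z^{(2)}]=-\mathcal{A}\phi(g)(w)$, the last expression being meaningful because $\phi(g)$ lies in the domain of $\mathcal{A}$ by Lemma \ref{lemma2}. Combining the two limits with $(\ref{4.100})$ gives $-g(w)=-\mathcal{A}\phi(g)(w)$ for all $w$, which is the claim. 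The main obstacle is precisely this interchange of $\mathcal{A}$ with the limit in $t$: it hinges on the uniform-in-$t$ growth estimates for $Df_t$ and $D^2f_t$ derived from $g\in M$, which supply the integrable dominating function $C_2(1+\|w\|)\|Z\|^2$ needed for the $\mathbb{E}[\cdot\, Z^{(2)}]$ term (equivalently, for the series $\sum_{k\ge 0}D^2f_t(w)[S_k^{(2)}]$).
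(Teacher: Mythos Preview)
Your argument is correct and is in fact more economical than the paper's. Both proofs start from Lemma~\ref{lemma3}, namely $T_tg-g=\mathcal{A}f_t$ with $f_t=\int_0^t T_ug\,du$, and both need a uniform-in-$t$ bound on $\mathcal{A}f_t(w)$ (the paper states it as a bound on $|\mathcal{A}\varphi_t(w)|$; you state it as growth bounds on $\|Df_t(w)\|$ and $\|D^2f_t(w)\|$ --- the same computation). The difference is in how this bound is used. You take $t\to\infty$ directly in $(\ref{4.100})$: the left side tends to $-g(w)$ because $T_tg(w)\to\mathbb{E}g(Z)=0$, and the right side tends to $-\mathcal{A}\phi(g)(w)$ because $D^kf_t(w)\to -D^k\phi(g)(w)$ via $(\ref{4.800})$ and the dominating function $C_2(1+\|w\|)\|Z\|^2$ handles the expectation in $Z$. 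The paper instead first proves the pointwise generator identity $\frac{d}{ds}T_sf(w)=T_s\mathcal{A}f(w)$ for $f\in M$, integrates it to $T_rf-f=\int_0^r T_s\mathcal{A}f\,ds$, applies this to $f_t$, passes $t\to\infty$ inside $\int_0^r T_s(\cdot)\,ds$ (using the uniform bound and, implicitly, $(\ref{4.100})$ together with $T_tg(w)\to 0$ to identify $\lim_t\mathcal{A}f_t=-g$), and finally lets $r\to 0$ to recover $\mathcal{A}\phi(g)=-g$. Your route avoids the detour through the $s$-derivative of the semigroup and the extra $r\to 0$ limit; the paper's route has the advantage of yielding the intermediate identity $\frac{d}{ds}T_sf=T_s\mathcal{A}f$, which is of independent interest but not needed for the proposition itself.
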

\begin{proof}
We note that for any $h>0$ and for any $f\in M$:
$$\frac{1}{h}\left[T_{n,s+h}f-T_sf\right]=T_s\left[\frac{T_h-I}{h}f\right].$$
We also note that for any $w\in D[0,1]$, $g\in M$ and some constant $K_1$ depending only on $f$:
\begin{align}
&\left|T_uf(w)-f(w)-\mathbb{E}Df(w)\left[\sigma(u)Z-w(1-e^{-u})\right]-\frac{1}{2}\mathbb{E}D^2f(w)\left[\left\lbrace\sigma(u)Z-w(1-e^{-u})\right\rbrace^{(2)}\right]\right|\nonumber\\
&\leq K_1(1+\|w\|^3)u^{3/2},\label{trala}
\end{align}
as noted on page 300 of \cite{diffusion}. Therefore, we can apply dominated convergence to obtain: 
\begin{align*}
\left(\frac{d}{ds}\right)^+T_sf(w)&=\lim_{h\searrow 0}T_s\left[\frac{T_h-I}{h}f(w)\right]=\lim_{h\searrow 0}\mathbb{E}\left[\frac{T_h-I}{h}f(we^{-s}+\sigma(s)Z)\right]\\
&=\mathbb{E}\left[\lim_{h\searrow 0}\frac{T_h-I}{h}f(we^{-s}+\sigma(s)Z)\right]=T_s\mathcal{A}f(w).
\end{align*}
Similarly, for $s>0$, $\left(\frac{d}{ds}\right)^-T_sf=T_s\mathcal{A}f$ because:
\begin{align*}
&\lim_{h\searrow0}\frac{1}{-h}\left[T_{s-h}f-T_sf\right](w)-T_s\mathcal{A}f(w)\\
=&\lim_{h\searrow0}T_{s-h}\left[\left(\frac{T_h-I}{h}-\mathcal{A}\right)f\right](w)+\lim_{h\searrow0}\left(T_{s-h}-T_s\right)\mathcal{A}f(w)\\
=&\lim_{h\searrow 0}\mathbb{E}\left[\left(\frac{T_h-I}{h}-\mathcal{A}\right)f(we^{-s+h}+\sigma(s-h)Z)\right]\\
&+\lim_{h\searrow 0}\mathbb{E}\left[\mathcal{A}f(we^{-s+h}+\sigma(s-h)Z)-\mathcal{A}f(we^{-s}+\sigma(s)Z)\right]\\
\stackrel{(\ref{trala})}=&0
\end{align*}
again, by dominated convergence. It can be applied because of (\ref{trala}) and the observation that for any $z\in D[0,1]$ and $h\in[0,1]$:
\begin{align*}
&\left|\mathcal{A}f(we^{-s+h}+\sigma(s-h)z)-\mathcal{A}f(we^{-s}+\sigma(s)z)\right|\\
=&\left|-Df(we^{-s+h}+\sigma(s-h)z)[we^{-s+h}+\sigma(s-h)z]\right.\\
&+\mathbb{E} D^2f(we^{-s+h}+\sigma(s-h)Z)[Z^{(2)}]\\
&\left.-Df(we^{-s}+\sigma(s)z)[we^{-s}+\sigma(s)z]+\mathbb{E} D^2f(we^{-s}+\sigma(s)z)[Z^{(2)}]\right|\\
\leq&\|f\|_M\left(1+\|we^{-s+h}+\sigma(s-h)z\|^2\right)\|we^{-s+h}+\sigma(s-h)z\|\\
&+\|f\|_M\left(1+\|we^{-s+h}+\sigma(s-h)z\|\right)\mathbb{E}\|Z\|^2\\
&+\|f\|_M\left(1+\|we^{-s}+\sigma(s)z\|\right)\|we^{-s}+\sigma(s)Z\|+\|f\|_M\left(1+\|we^{-s}+\sigma(s)z\|\right)\mathbb{E}\|Z\|^2\\
\leq&\|f\|_M\left(1+2\|w\|^2e^{-2s+2}+2\sigma^2(s-1)\|z\|^2\right)\left(\|w\|e^{-s+1}+\sigma(s-1)\|z\|\right)\\
&+\|f\|_M\left(1+\|we^{-s+1}+\sigma(s-1)z\|\right)\mathbb{E}\|Z\|^2\\
&+\|f\|_M\left(1+\|we^{-s}+\sigma(s)z\|\right)\|we^{-s}+\sigma(s)z\|+\|f\|_M\left(1+\|we^{-s}+\sigma(s)z\|\right)\mathbb{E}\|Z\|^2
\end{align*}
and so for any $h\in[0,1]$, $\left|\mathcal{A}f(we^{-s+h}+\sigma(s-h)Z)-\mathcal{A}f(we^{-s}+\sigma(s)Z)\right|$ is bounded by a random variable with finite expectation.

Thus, for all $w\in D[0,1]$ and $s>0$:
\begin{equation*}
\frac{d}{ds}T_sf(w)=T_s\mathcal{A}f(w)
\end{equation*}
and so, by the Fundamental Theorem of Calculus:
\begin{equation}\label{4.12}
T_rf(w)-f(w)=\int_0^rT_s\mathcal{A}f(w)ds.
\end{equation}
By Remark \ref{remark}, we can apply (\ref{4.12}) to $f=\int_0^tT_ugdu$ to obtain:
$$T_r\int_0^tT_ug(w)du-\int_0^tT_ug(w)du=\int_0^rT_s\mathcal{A}\left(\int_0^tT_ug(w)du\right)ds.$$
Now, we take $t\to\infty$. Let $Z'$ be an independent copy of $Z$. We apply dominated convergence, which is allowed because of (\ref{4.77}) and the following bound for $\varphi_t(w)=\int_0^tT_ug(w)du$:
\begin{align*}
&|\mathcal{A}\varphi_t(w)|\\
\leq&\int_0^t\mathbb{E}_Z\left|e^{-u}Dg(we^{-u}+\sigma(u)Z)[w]\right|du\\
&+\int_0^t\mathbb{E}_Z\left\lbrace\mathbb{E}_{Z'}\left|e^{-2u}D^2g(we^{-u}+\sigma(u)Z)\left[(Z')^{(2)}\right]\right|\right\rbrace du\\
\leq&\int_0^\infty\mathbb{E}_Z\left|e^{-u}Dg(we^{-u}+\sigma(u)Z)[w]\right|du\\
&+\int_0^\infty\mathbb{E}_Z\left\lbrace\mathbb{E}_{Z'}\left|e^{-2u}D^2g(we^{-u}+\sigma(u)Z)\left[(Z')^{(2)}\right]\right|\right\rbrace du\\
\leq& \|g\|_M\int_0^\infty e^{-u}\left(1+\mathbb{E}_Z\|we^{-u}+\sigma(u)Z\|^2\right)\|w\|du\\
&+\|g\|_M\int_0^\infty e^{-2u}\left(1+\mathbb{E}_Z\|we^{-u}+\sigma(u)Z\|\right)\mathbb{E}_{Z'}\|Z'\|^2du\\
\leq&\|g\|_M\int_0^\infty\left(e^{-u}+2\|w\|^2e^{-3u}+2\mathbb{E}_Z\|Z\|(e^{-u}-e^{-3u}\right)\|w\|du\\
&+\|g\|_M\int_0^\infty \left(e^{-2u}+\|w\|e^{-3u}+\sigma(u)e^{-2u}\right)\mathbb{E}_Z\|Z\|^2du\\
\leq&\left(1+\frac{4}{3}\mathbb{E}_Z\|Z\|^2\right)\|g\|_M(1+\|w\|^2)\|w\|+\left(\frac{1}{2}+\frac{\mathbb{E}_Z\|Z\|}{3}\right)\|g\|_M(1+\|w\|)\mathbb{E}_Z\|Z\|,
\end{align*}
where the second inequality follows again by dominated convergence applied because of (\ref{4.77}) in order to exchange integration and differentiation in a way similar to (\ref{4.800}). Then, we obtain:
\begin{align*}
T_r\int_0^\infty T_ug(w)du-\int_0^\infty T_ug(w)du&=\int_0^r T_s\lim_{t\to\infty}\mathcal{A}\left(\int_0^tT_ug(w)du\right)ds\\
&\stackrel{(\ref{4.100})}=-\int_0^r T_sg(w)ds.
\end{align*}
Now, by Lemma \ref{lemma2}, we can divide both sides by $r$ and take $r\to 0$ to obtain:
\begin{align*}
\mathcal{A}\left(\int_0^\infty T_ug(w)du\right)=-\lim_{r\to 0}\frac{1}{r}\int_0^r T_sg(w)ds
&=-\lim_{r\to 0}\left[\frac{1}{r}\int_0^r\mathbb{E}g(we^{-s}+\sigma(s)Z)ds\right]\\
&\stackrel{(\ref{4.200})}=-g(w),
\end{align*}
which finishes the proof.
\end{proof}
\begin{remark}
In \cite[Proposition 15]{vollmer} the authors prove that the semigroup of an $\mathbb{R}^d$-valued It\^o diffusion with Lipschitz drift and diffusion coefficients is strongly continuous on the space $L'=\left\lbrace x\mapsto (1+\|x\|^2)f(x): f\in C_0(\mathbb{R}^d)\right\rbrace $, equipped with the norm $\|f\|_{L'}=\sup_{x\in\mathbb{R}^d}\|f(x)\|_2/(1+\|x\|_2)$, where $C_0(\mathbb{R}^d)$ is the set of all continuous functions vanishing at infinity and $\|\cdot\|_2$ is the $l^2$ norm on $\mathbb{R}^d$. It might seem natural to try to adapt their argument to the infinite-dimensional setting and consider the space $L''=\left\lbrace w\mapsto (1+\|w\|^4)f(w): f\in C_0(D,\mathbb{R})\right\rbrace$, equipped with the norm $\|f\|_{L''}=\sup_{w\in D}|f(w)|/(1+\|w\|^4)$. Since $M\subset L''\subset L$, the semigroup \ref{semigroup} being strongly continuous on $L''$ would readily imply Proposition \ref{proposition}.

However, there is no easy extension of the argument used in the proof of \cite[Proposition 15]{vollmer} to the infinite dimensional setting. The reason is that the Riesz-Markov theorem for space $L'$ \cite[Theorem 2.4]{doersek} invoked in the proof, requires a closed unit ball in the domain of the functions in $L'$ to be compact. In other words, it requires the domain of the functions in $L'$ to be a finite-dimensional space. Since $D$ is infinite-dimensional, \cite[Theorem 2.4]{doersek} cannot be easily adapted to our setting and so the proof of \cite[Proposition 15]{vollmer} cannot be easily adapted either.
\end{remark}




\providecommand{\bysame}{\leavevmode\hbox to3em{\hrulefill}\thinspace}
\providecommand{\MR}{\relax\ifhmode\unskip\space\fi MR }
\providecommand{\MRhref}[2]{%
  \href{http://www.ams.org/mathscinet-getitem?mr=#1}{#2}
}
\providecommand{\href}[2]{#2}


\ACKNO{We would like to thank Professor A.D. Barbour for an interesting discussion related to this work.}


\end{document}